\documentclass[preprint,12pt]{elsarticle}

\usepackage{enumerate}
\usepackage{amssymb}
\usepackage{lipsum}
\usepackage[a4paper, total={6.0in, 8.4in}]{geometry}
\usepackage{booktabs}
\usepackage{amsmath,amssymb,url}
\usepackage{enumitem} 
\usepackage{graphics} 
\usepackage{array}
\usepackage{dsfont}
\usepackage[all]{xy}
\usepackage{amsthm}
\usepackage{tikz}
\usepackage{bbding}
\usepackage{graphicx}

\numberwithin{equation}{section}
\newtheorem{theorem}{Theorem}[section]
\newtheorem{lemma}[theorem]{Lemma}

\newtheorem{corollary}[theorem]{Corollary}

\newtheorem{example}[theorem]{Example}

\newcommand{\dn}{\mathord{\downarrow}\hspace{0.05em}}
\newcommand{\up}{\mathord{\uparrow}\hspace{0.05em}}
\newcommand{\uuar}{\mathord{\Downarrow}\hspace{0.05em}}

\newcommand\blfootnote[1]{%
\begingroup
\renewcommand\thefootnote{}\footnote{#1}%
\addtocounter{footnote}{-1}%
\endgroup
}

\journal{}

\begin{document}

\begin{frontmatter}



\title{Sobriety of Scott topologies under countability conditions}


\author{Zhengmao He$^\star$, Zhaochen Dong$^*$, Kaiyun Wang$^*$}
\address{$^\star$School of Sciences, Southwest Petroleum University,Chengdu 610500,Sichuan China}
\address{$^{*}$School of Mathematics and Statistics, Shaanxi Normal University, Xi'an 710119, Shaanxi, China}

\begin{abstract} In this paper, we focus on the sobriety of the Scott topology in countable case. Specifically, we show that:

(1) every countable core-compact dcpo is sober with respect to the Scott topology;

(2) the lattice of all open sets for the rational numbers space $\mathbb{Q}$ equipped with the Scott topology is not sober.
\end{abstract}
\begin{keyword}  Scott topology; sober space; core-compact; rational numbers space\\
\vspace*{0.2cm}
{\em Mathematics Subject Classification:} 54A05; 06B30; 06B35; 06F30
\end{keyword}


\end{frontmatter}
\blfootnote{$^\star$ Corresponding author}
\blfootnote{This work is supported by the National Natural Science Foundation of China (Grant nos.12471438,12601906) and the Sichuan Science and Technology Program (Grant No. 2026NSFSC0784).}
\blfootnote{E-mail address: hezhengmaomath@163.com (Z.M.He);dzc0419@gmail.com(Z.C.Dong);\\ wangkaiyun@snnu.edu.cn(K.Y.Wang).}


\section{Introduction}
\label{}

Sober spaces play an important role in both general topology and the study of non-Hausdorff spaces. In 1969, M. Hochster gave a characterization for the spectral space of a commutative ring in terms of sober spaces(\cite{U11}). Domain theory was introduced by Dana Scott to provide mathematical models for computation, especially recursive and nonterminating processes. The Scott topology is fundamental in this setting, since its open sets describe information that can be detected through finite approximations. Sobriety ensures that every irreducible closed set is represented by a unique point. For this reason, sobriety is a fundamental topological property in domain theory. A central question is when a dcpo equipped with the Scott topology is a sober space in domain theory. A classical result states that the Scott topology on every continuous dcpo and, more generally, on every quasicontinuous dcpo is sober(\cite{GG03}). To investigate which order-theoretic properties ensure that a dcpo endowed with the Scott topology is sober, researchers have constructed a series of dcpos whose Scott topologies are non-sober(\cite{U13,U14,ME182,AB75,EEE20}). More recently, the following two important results concerning the non-sobriety of the Scott topology is established:

$\bullet$ The complete Boolean algebra of all the regular open subsets in the real line equipped with the Scott topology is non-sober(\cite{EEF32}).

$\bullet$ There are two uncountable dcpos whose Scott topologies are sober, but their product equipped with the Scott topology is non-sober(\cite{EEF31}).\\
Both of the conclusions stated above concern the sobriety of the Scott topology in the uncountable case. As we know, continuity and quasi-continuity on dcpos are also among the major research topics in domain theory. Recently, countability considerations plays a significant role in domain theory. More specifically, the following two important results has been established:

$\bullet$ Every countable continuous dcpo is an algebraic dcpo(\cite{U15}).

$\bullet$ Every countable quasi-continuous dcpo is a quasialgebraic dcpo(\cite{U16}).\\
The above results indicate that countability leads to stronger properties concerning continuity in dcpos. This motivates us to investigate the sobriety of the Scott topology in the countable case.

Compactness conditions provide another approach to the sobriety of the Scott topology. Lawson, Wu and Xi proved that every core-compact well-filtered $T_{0}$ space is sober(\cite{GOY03}). This result motivates the study of sobriety under compactness assumptions when well-filteredness is not assumed. Two questions proposed by Xu and Zhao [17, Questions 3.13 and 3.14] concern whether a meet-continuous dcpo with a core-compact Scott space must be Scott sober, and whether local compactness of the Scott space of a dcpo implies sobriety.

Countability alone does not guarantee Scott sobriety because Miao, Xi, Li and Zhao [13] constructed a countable distributive complete lattice whose Scott space is non-sober. Thus, even within the countable setting, additional hypotheses are needed to obtain positive results. Our results identify how the compactness assumptions in the two questions yield sobriety in this setting and establish approximation properties underlying these conclusions.

In section 3, we investigate these questions for countable dcpos. Specifically, we prove that for a countable core-compact dcpo $P$, $\Sigma P$ is sober. Therefore, Problem 3.13 and Problem 3.14 posed by Xu and Zhao in \cite{U17} has a positive answer in the countable case. 

In \cite{AB75}, Miao, Xi, Li and Zhao present a countable distributive complete lattice $L$ whose Scott topology is non-sober. Actually, $L$ is the Dedekind-MacNeille completion of the countable dcpo $\widehat{P}$ (\cite{AB75}). Interestingly, it has been proved that for the countable dcpo $\widehat{P}$, the lower topology $\omega(\widehat{P})$ is not sober with respect to its Scott topology(\cite{U18}). Furthermore, a countable $T_{1}$ space $X$ can be reconstructed such that the open set lattice of $X$ is not Scott sober(\cite{U18}). In section 4, we prove that there exists a surjective open mapping from the rational numbers space $\mathbb{Q}$ to the space $(\widehat{P},\omega(\widehat{P}))$. It follows that the lattice of all open subsets of the rational numbers space $\mathbb{Q}$ is not Scott sober. Thus, we obtain a natural countable metrizable space whose lattice of open sets is non-sober with respect to the Scott topology.
\section{Preliminaries}
\label{}

\quad Let $P$ be a poset and $A\subseteq P$. Define $$\up A=\{x\in P\mid \exists\ a\in A, a\leq x \}$$ and $$\dn A=\{x\in P\mid \exists\ a\in A, x\leq a\}.$$
For each $x\in P$, we write $\dn\{x\}$ as $\dn x$ and $\up\{x\}$ as $\up x$. A nonempty subset $D$ of $P$ is directed provided that $\forall\ a,b\in D$, $\up a\cap \up b \cap D\neq\emptyset$. Dually, a nonempty subset $D\subseteq P$ is filtered if $\forall\ a,b\in D$, $\dn a\cap \dn b \cap D\neq\emptyset$.

\quad Let $P$ be a poset and $A\subseteq P$. An element $x$ is an upper bound for $A$ if $x\in\bigcap\limits_{a\in A}\up a$. The symbol $A^{u}$ denotes all upper bounds of $A$. We say that $x\in A$ is a minimal(maximal) element of $A$ if $\dn x\cap A=\{x\}$ ($\up x\cap A=\{x\}$). The notation ${\rm min(A)}$(${\rm max(A)}$) refers to the set of all minimal(maximal) elements of $A$.

\quad Given a poset $P$ and $A\subseteq P$, a subset $B$ is cofinal in $A$ if for every $a\in A$, there is a $b\in B\cap\up a$.

\quad A poset $P$ is called a {\em directed complete poset} ({\em dcpo}, for short) if every
directed set of $P$ has a supremum.

\quad Let $P$ be a poset and $x, y\in P$. We say that $x$ is {\em way below} $y$, in symbols $x\ll y$, iff for each directed subset $D\subseteq P$ whenever $\bigvee D$ exists, $y\leq\bigvee D$ implies $\up x\cap D\neq\emptyset$.\ If $\uuar p=\{b\in P\mid b\ll p\}$ is directed and $\bigvee \uuar p=p$\ for every \ $p\in P$, we call $P$ a {\em continuous poset}.

\quad Let $P$ be a poset and $G, H\subseteq P$. We say that $G$ {\em approximates} $H$ and write $G\ll H$, iff for each directed subset $D\subseteq P$ for which $\bigvee D$ exists, $\bigvee D\in\up H$ implies $D\cap\up G\neq\emptyset$.\ In particular, $F\ll x$ denotes $F\ll\{x\}$. A poset is a {\em quasicontinuous poset}
 if $$fin(x)=\{F\mid F\ll x\ \mbox{and}\ F\ \mbox{is finite}\}$$ is directed (in the sense that for every $F_{1},F_{2}\in fin(x)$, there is a $F_{3}\in fin(x)$ such that $F_{3}\subseteq \up F_{1}\cap \up F_{2}$) and $$\bigcap fin(x)=\up x$$\ for every \ $x\in P$.

\quad Let $P$ be a poset. A subset $U\subseteq P$ is said to be {\em Scott open} (see \cite{GG03,JG13})if

 (i) $U=\up U$, and

 (ii) for each directed subset $D$, $\bigvee D\in U$ implies
$D\cap U\neq \emptyset$, whenever $\bigvee D$ exists.\\
The collection of Scott open sets of $P$ is the Scott topology $\sigma(P)$. We write $\Sigma P$ for $(P,\sigma(P))$. The lower topology $\omega(P)$ on $P$ is the topology generated by   $$\{P\setminus\up x\mid x\in P\}.$$

\quad Let $P,Q$ be dcpos. Then the mapping $f:\Sigma P\longrightarrow \Sigma Q$ is continuous (or called Scott continuous) if and only if $f$ is monotone and for each directed set $D\subseteq P$, $f(\bigvee D)=\bigvee\limits_{d\in D}f(d)$.

\quad Let $X$ be a topological space. A nonempty subset $F$ of $X$ is {\em irreducible}, if for every pair of closed sets $A, B\subseteq X$, $F\subseteq A\cup B$ implies $F\subseteq A$ or $F\subseteq B$. A $T_{0}$ space $X$ is {\em sober} if every irreducible closed set is the closure of a singleton.

\quad A dcpo $P$ is called Scott sober if the dcpo $P$ is sober endowed with the Scott topology.

\quad Given a $T_{0}$ space $X$, the {\em specialization order} $\leq$ on $X$ is defined by
$$x\leq y \Longleftrightarrow x\in cl(\{y\}).$$
Unless otherwise stated, throughout
the paper, whenever an order-theoretic concept is mentioned in the context of a $T_{0}$ space $X$, it is to be interpreted with respect to the specialization order on $X$.

\quad Given a topological space $X$, we use $\mathcal{O}(X)$ to denote the lattice of open sets of $X$ ordered by the inclusion.

\quad A subset $A$ of a topological space $X$ is {\em saturated} if $A=\up A$. Given a topological space $X$, we shall use $K(X)$ to denote the poset of all nonempty
compact saturated subsets of $X$ with the reverse inclusion order.

\quad A topological space $X$ is called {\em locally compact} if for each $U\in \mathcal{O}(X)$ and for each $x\in U$, there exist an open set $V$ and a $K\in K(X)$ such that $x\in V\subseteq K\subseteq U$. A topological space $X$ is said to be  {\em core-compact } if  $\mathcal{O}(X)$ is continuous.

\quad A topological space $X$ is a {\em retract} of a topological space $Y$ if there are two continuous maps $f:X\longrightarrow Y$ and $g:Y\longrightarrow X$ such that $g\circ f=id_{X}$.

\section{Every countable core-compact dcpo is Scott sober}
\begin{lemma} {\rm(see \cite{GG03}) Let $P$ be a dcpo. Then $P$ is quasicontinuous if and only if $\forall\ U\in\sigma(P)$ and $x\in U$, there is a nonempty finite subset $F\subseteq U$ such that $F\ll x$.}

\end{lemma}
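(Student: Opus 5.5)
Both directions go through the definition of quasicontinuity and the defining property of Scott open sets. The key tool is the Rudin-type lemma for directed families of finite sets, as in \cite{GG03}: if $\{\up F_i\}$ is a family of finite sets that is directed under reverse inclusion, then there is a directed set $D\subseteq\bigcup_i F_i$ meeting every $F_i$.

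\emph{Necessity.} Assume $P$ is quasicontinuous, and let $U\in\sigma(P)$ and $x\in U$.

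First I will show that some $F\in fin(x)$ satisfies $F\subseteq U$, or at least $\up F\subseteq U$. Suppose instead that $F\not\subseteq U$ for every $F\in fin(x)$, and put $F'=F\setminus U$, which is nonempty.

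The family $\{\up F'\mid F\in fin(x)\}$ should again be directed. Given $F_1,F_2\in fin(x)$, choose $F_3\in fin(x)$ with $F_3\subseteq\up F_1\cap\up F_2$. Since $U$ is an upper set, an element of $F_3\setminus U$ lies above some element of $F_1$, and that element cannot be in $U$. Hence $F_3'\subseteq \up F_1'\cap\up F_2'$.

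By Rudin's lemma there is a directed $D\subseteq\bigcup_F F'\subseteq P\setminus U$ meeting every $\up F'$. Let $d=\bigvee D$. Then $d\in\up F'\subseteq\up F$ for every $F\in fin(x)$, so $d\in\bigcap fin(x)=\up x\subseteq U$. Since $U$ is Scott open, $D\cap U\neq\emptyset$, which contradicts $D\subseteq P\setminus U$.

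So some $F\in fin(x)$ has $F\subseteq U$. It is nonempty, because $\emptyset\ll x$ fails: take $D=\{x\}$.

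\emph{Sufficiency.} Assume the approximation condition holds.

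\emph{Step 1: $\bigcap fin(x)=\up x$.} Each $F\in fin(x)$ satisfies $F\ll x$; applying this to $D=\{x\}$ gives $x\in\up F$, so $\up x\subseteq\up F$. Conversely, let $y\notin\up x$. Then $x\notin\dn y$, and $\dn y$ is Scott closed, so $U=P\setminus\dn y$ is a Scott open set containing $x$. The hypothesis gives a finite $F\subseteq U$ with $F\ll x$. Since $U$ is an upper set, $y\notin\up F$.

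\emph{Step 2: $fin(x)$ is directed.} This is the main obstacle. Take $F_1,F_2\in fin(x)$. The natural candidate is to apply the hypothesis to the open set $\operatorname{int}(\up F_1)\cap\operatorname{int}(\up F_2)$, so the real issue is showing that $x\in\operatorname{int}\up F$ whenever $F\ll x$.

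I would define $V=\{z\mid F\ll z\}$ and aim to prove $V$ is Scott open and contained in $\up F$. It is clearly an upper set and lies in $\up F$.

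For inaccessibility, let $D$ be directed with $\bigvee D\in V$. Suppose for contradiction that no $d\in D$ lies in $V$. Then for each $d$ there is a directed $E_d$ with $d\le\bigvee E_d$ and $E_d\cap\up F=\emptyset$. The plan is to assemble the sets $E_d$ into a single directed set avoiding $\up F$ whose supremum is at least $\bigvee D$, contradicting $F\ll\bigvee D$.

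If that assembly is awkward, I will first use the hypothesis to interpolate. For each $y\notin\up F$, apply the hypothesis to $x\in P\setminus\dn y$. Iterating then yields a finite $G$ with $G\ll x$ and $G\subseteq\operatorname{int}\up F$, via the standard proof that such $F$ interpolate (as in \cite{GG03}).

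Once $\operatorname{int}\up F_i$ are Scott open sets containing $x$, the hypothesis gives a finite $F_3\subseteq \operatorname{int}\up F_1\cap\operatorname{int}\up F_2$ with $F_3\ll x$. This $F_3$ lies in $\up F_1\cap\up F_2$, which establishes directedness and completes the proof.
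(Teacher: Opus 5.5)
The paper gives no proof of this lemma; it is quoted from \cite{GG03}, so your argument has to stand on its own. Your necessity half (Rudin's lemma applied to $\{F\setminus U\mid F\in fin(x)\}$) is correct, and so is Step~1 of sufficiency.

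\textbf{The gap is Step~2}, which is the only substantial part of sufficiency. You correctly reduce directedness of $fin(x)$ to showing that $F\ll x$ implies $x\in\mathrm{int}\,\up F$, but neither of your routes proves this.

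\emph{Route (a)} never uses the hypothesis, and without it the claim is false. Take the dcpo generated by $f<d_0<d_1<\cdots<x$, $d_n<s_n$, and chains $e_{n,0}<e_{n,1}<\cdots<s_n$ with $\bigvee_m e_{n,m}=s_n$, with no further relations. Then $f\ll x$, but each $E_n$ is a directed set outside $\up f$ whose supremum $s_n$ lies above $d_n$. Hence $\{z\mid f\ll z\}=\{x\}$, which is not Scott open because $x=\bigvee_n d_n$. In the same example, $x\notin\mathrm{int}\,\up f$.

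The ``assembly'' of the sets $E_d$ is exactly what cannot be done for free. A union of directed sets indexed by $D$ need not be directed. The tool that would produce a single directed set, Rudin's lemma, needs a filtered family of finite sets, and that is the directedness of $fin(x)$ you are trying to prove.

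\emph{Route (b)} is circular: the interpolation argument in \cite{GG03} takes quasicontinuity (directedness of $fin(x)$ plus Rudin) as its premise.

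\textbf{How Step~2 can be closed under a stronger hypothesis.} When the paper invokes this lemma (proof of Theorem 3.3), what it actually establishes is stronger than $F\ll x$. It produces a Scott open $V$ with $x\in V\subseteq\up F\subseteq U$, i.e.\ $x\in\mathrm{int}\,\up F$. With the hypothesis in that form, Step~2 closes using tools you already have.

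Let $\mathcal{H}_x=\{H\mid H\mbox{ finite},\ x\in\mathrm{int}\,\up H\}$; this is a subset of $fin(x)$. It is directed: apply the hypothesis to the open set $\mathrm{int}\,\up H_1\cap\mathrm{int}\,\up H_2$. Your Step~1 argument gives $\bigcap_{H\in\mathcal{H}_x}\up H=\up x$.

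Now take $F\in fin(x)$ and suppose no $H\in\mathcal{H}_x$ satisfies $H\subseteq\up F$. Apply Rudin's lemma to $\{H\setminus\up F\mid H\in\mathcal{H}_x\}$, which is filtered exactly as in your necessity part. This gives a directed $D\subseteq P\setminus\up F$ with $\bigvee D\in\up x$, contradicting $F\ll x$.

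Hence for $F_1,F_2\in fin(x)$ there are $H_i\in\mathcal{H}_x$ with $H_i\subseteq\up F_i$. Any $H_3\in\mathcal{H}_x$ with $H_3\subseteq\up H_1\cap\up H_2$ then witnesses directedness of $fin(x)$.

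With only $F\ll x$ in the hypothesis, as the lemma is literally stated, you would still have to derive $x\in\mathrm{int}\,\up F$ from the hypothesis. Your proposal does not do this.
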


\begin{theorem} {\rm(see \cite{GG03}) Every quasicontinuous dcpo endowed with the Scott topology is always locally compact sober.}
\end{theorem}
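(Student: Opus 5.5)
This is the classical result from \cite{GG03}; I would follow its standard route and prove local compactness first, then sobriety.

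\emph{Local compactness.} Fix $U\in\sigma(P)$ and $x\in U$. By the preceding lemma there is a nonempty finite $F\subseteq U$ with $F\ll x$. I would put $V=\{y\in P\mid F\ll y\}$ and $K=\up F$, and check three things.
\begin{itemize}
\item $K$ is compact saturated, since it is a finite union of principal filters and each $\up a$ is compact in $\Sigma P$.
\item $x\in V\subseteq K\subseteq U$. Here $V\subseteq \up F$ follows by applying $F\ll y$ to the directed set $\{y\}$, and $\up F\subseteq U$ because $F\subseteq U$ and $U$ is an upper set.
\item $V$ is Scott open. This is the main obstacle, and it needs the interpolation property of quasicontinuous dcpos: if $F\ll y$, then there is a finite $G$ with $F\ll G\ll y$.
\end{itemize}

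To get interpolation, I would use that $fin(y)$ is directed with $\bigcap fin(y)=\up y$, together with the Rudin-lemma style fact that a filtered family of finitely generated upper sets contained in a Scott open set has a member inside it. Granting interpolation, $V$ is open: it is clearly an upper set, and if $\bigvee D\in V$ then $G\ll\bigvee D$ gives some $d\in D\cap\up G$. Since $F\ll G$ implies $F\ll d$, we get $d\in V$.

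\emph{Sobriety.} Let $A$ be an irreducible Scott-closed set. I would show that $A$ is directed; then $a=\bigvee A\in A$ because $A$ is closed, and $A=\dn a=cl(\{a\})$. Uniqueness of $a$ is automatic, since $\Sigma P$ is $T_0$.

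For directedness, take $a_1,a_2\in A$ and suppose no element of $A$ lies above both. I want two open sets $U_1,U_2$, each meeting $A$, with $U_1\cap U_2\cap A=\emptyset$; this contradicts irreducibility, because then $A\subseteq (P\setminus U_1)\cup(P\setminus U_2)$ with $A$ contained in neither.

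To build them, consider the filtered families $fin(a_1)$ and $fin(a_2)$, whose intersections are $\up a_1$ and $\up a_2$. Irreducibility says that $\{U\in\sigma(P)\mid U\cap A\neq\emptyset\}$ is a filter of open sets. Each $\mathrm{int}\,\up F$ with $F\in fin(a_i)$ contains $a_i$, using the open sets $V$ from the first part, so all of these meet $A$. Hence for every $F_1\in fin(a_1)$ and $F_2\in fin(a_2)$ there is a point of $A$ in $\up F_1\cap\up F_2$.

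These sets $A\cap\up F_1\cap\up F_2$ form a filtered family of nonempty sets. I would then apply Rudin's lemma to $\{\up(F_1)\cap\up(F_2)\}$ to extract a directed set $D\subseteq A$ meeting every such set. Its supremum lies in $A$, since $A$ is closed, and also lies in $\bigcap\up F_1\cap\bigcap\up F_2=\up a_1\cap\up a_2$. This is exactly an upper bound of $a_1,a_2$ in $A$, contradicting our assumption.

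The delicate part is the correct application of Rudin's lemma in both places. The intersections $\up F_1\cap\up F_2$ are again finitely generated upper sets only up to minimal elements, so I would instead choose points of $A$ from each set of the finite family, which is the standard form of Rudin's lemma.
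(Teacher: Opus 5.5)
The paper gives no proof of this statement; it is quoted from the literature. So I am comparing your sketch with the standard argument. Your overall plan is right, but the step you call the main obstacle is circular as you propose to do it. The Rudin-type fact says: if $\{\uparrow G_i\}$ is filtered with each $G_i$ finite and $\bigcap_i\uparrow G_i\subseteq W$ for a Scott open $W$, then $\uparrow G_i\subseteq W$ for some $i$. To extract from $fin(y)$ a $G$ with $F\ll G$, i.e.\ with $\uparrow G\subseteq V=\{z\mid F\ll z\}$, you must supply a Scott open $W$ with $\uparrow y\subseteq W\subseteq V$. The existence of such a $W$ is exactly $y\in\mathrm{int}\,V$, which is what interpolation was supposed to give you. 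Nothing else in your sketch produces an open set around $y$ inside $\uparrow F$. So local compactness is not established, and the sobriety half inherits the gap, since it uses $a_i\in\mathrm{int}\uparrow F$ for $F\in fin(a_i)$.

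The missing idea is to prove directly that $V$ is Scott open, using the combinatorial Rudin lemma. Interpolation then follows from openness by exactly your corollary argument.
\begin{itemize}
\item Suppose $E$ is directed, $F\ll\bigvee E$, but $F\not\ll e$ for every $e\in E$. Pick directed sets $D_e\subseteq P\setminus\uparrow F$ with $e\le\bigvee D_e$.
\item Since $fin(e)\subseteq fin(e')$ whenever $e\le e'$, the family $\mathcal H=\bigcup_{e\in E}fin(e)$ is filtered, and $\bigcap_{H\in\mathcal H}\uparrow H=\bigcap_{e\in E}\uparrow e=\uparrow\bigvee E$.
\item For $H\in fin(e)$, the relation $H\ll e\le\bigvee D_e$ forces $H$ to meet $\downarrow D_e\subseteq P\setminus\uparrow F$.
\item Because $P\setminus\uparrow F$ is a lower set, $\{H\setminus\uparrow F\mid H\in\mathcal H\}$ is a filtered family of nonempty finite sets.
\item Rudin's lemma gives a directed $D\subseteq P\setminus\uparrow F$ meeting all of these sets. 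Hence $\bigvee D\in\uparrow\bigvee E$, which contradicts $F\ll\bigvee E$.
\end{itemize}

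The sobriety part has a second, smaller gap. The sets $\uparrow F_1\cap\uparrow F_2$ need not be finitely generated, and ``choosing points'' does not by itself produce a filtered family of finite sets. Once $V$ is known to be open, it is simpler to drop pairwise directedness altogether.
\begin{itemize}
\item Let $\mathcal F$ be the set of finite $H$ with $\mathrm{int}\uparrow H\cap A\neq\emptyset$.
\item Given $H_1,H_2\in\mathcal F$, irreducibility gives some $c\in A\cap\mathrm{int}\uparrow H_1\cap\mathrm{int}\uparrow H_2$.
\item The corollary gives $H_3\in fin(c)$ with $\uparrow H_3\subseteq\mathrm{int}\uparrow H_1\cap\mathrm{int}\uparrow H_2$, and openness of $V$ gives $c\in\mathrm{int}\uparrow H_3$. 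Hence $\{H\cap A\mid H\in\mathcal F\}$ is a filtered family of nonempty finite sets.
\item Rudin's lemma yields a directed $D\subseteq A$ with $s=\bigvee D\in A$.
\item For $a\in A$, every $G\in fin(a)$ lies in $\mathcal F$, so $s\in\bigcap fin(a)=\uparrow a$. Therefore $A=\downarrow s$.
\end{itemize}
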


\begin{theorem}
{\rm Let $P$ be a countable dcpo. If
$\Sigma P$ is core-compact, then $P$ is
quasicontinuous. Consequently, $\Sigma P$ is locally compact
sober.}
\end{theorem}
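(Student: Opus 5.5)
The plan is to verify the criterion of the Lemma: for every $U\in\sigma(P)$ and $x\in U$ we want a nonempty finite $F\subseteq U$ with $F\ll x$. The Lemma then gives quasicontinuity of $P$, and the preceding Theorem gives that $\Sigma P$ is locally compact sober. So everything reduces to producing $F$, using core-compactness and countability.

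\textbf{Step 1 (core-compactness).} Since $\mathcal{O}(\Sigma P)$ is continuous, $U=\bigcup\{V\in\sigma(P)\mid V\ll U\}$. Hence we can pick a Scott open $V$ with $x\in V\ll U$. By interpolation in the continuous lattice we may also pick $W$ with $V\ll W\ll U$, in case an extra layer is needed. We use the standard reformulation of $\ll$ in $\mathcal{O}(X)$: $V\ll W$ iff every directed open cover of $W$ has a member containing $V$.

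\textbf{Step 2 (countability, contradiction set-up).} Enumerate $U=\{u_1,u_2,\dots\}$ and put $F_n=\{u_1,\dots,u_n\}$. Suppose no $F_n$ satisfies $F_n\ll x$. Since a finite $G\subseteq U$ lies inside some $F_n$, and $F_n\ll x$ would follow from $G\ll x$, this assumption covers every finite subset of $U$. So for each $n$ there is a directed $D_n$ with $x\le\bigvee D_n$ and $D_n\cap\up F_n=\emptyset$. In a countable dcpo a directed set has a cofinal chain, so we may take $D_n$ to be an increasing sequence. Let $E_n$ be the Scott closure of the lower set $P\setminus\up F_n$. Then $E_n\supseteq D_n$, so $\bigvee D_n\in E_n$ and hence $x\in E_n$. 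The sets $G_n=P\setminus E_n$ are Scott open, increasing in $n$, satisfy $G_n\subseteq\up F_n\subseteq U$, and none of them contains $x$, hence none contains $V$.

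\textbf{Step 3 (main obstacle).} It remains to show that $\{G_n\}$ (or its union with $U$ intersected suitably) is a directed open cover of $W$. Then $V\ll W$ forces $V\subseteq G_n$ for some $n$, contradicting $x\notin G_n$. Equivalently, each $w\in W$ must eventually avoid $E_n$. This is where I expect the real work. The Scott closure of the lower set $P\setminus\up F_n$ can add points beyond $P\setminus\up F_n$. In a countable dcpo these extra points arise by a countable transfinite iteration of taking sups of increasing sequences. I would try to control this by diagonalising over the enumeration, choosing the $D_n$ and the enumeration compatibly. I would also use $W\ll U$ a second time to show that a point of $W$ lying in every $E_n$ yields a chain whose supremum lies in $W$ while the chain eventually escapes every $\up F_n$. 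That is impossible, because $W\subseteq U=\bigcup_n\up F_n$ and $W$ is Scott open. If this direct covering argument fails, the fallback is to replace $G_n$ by $\mathrm{int}_\sigma(\up F_n)$. One would then prove that these interiors exhaust $W$, using local compactness-type consequences of core-compactness on the countable set $W$.
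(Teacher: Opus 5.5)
There is a genuine gap. Your Step 3 is not a technical detail; it is the whole content of the theorem, and the proposal gives no mechanism for it.

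First, your $G_n=P\setminus\mathrm{cl}_\sigma(P\setminus\up F_n)$ is exactly $\mathrm{int}_\sigma(\up F_n)$. So the ``fallback'' is the same construction, not an alternative. Every $y\in\mathrm{int}_\sigma(\up F_n)$ satisfies $F_n\ll y$. Hence showing that these interiors exhaust $W$ already proves the quasicontinuity criterion for every point of $W$ at once, which is what you set out to establish.

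Your contradiction hypothesis is pointwise: it only supplies directed sets $D_n$ witnessing $F_n\not\ll x$. Information about the single point $x$ cannot produce a cover of $W$. The transfinite analysis of Scott closures is not carried out. The claim that a point of $W$ lying in every $E_n$ yields one chain escaping every $\up F_n$ is unjustified, because membership in $\mathrm{cl}_\sigma(P\setminus\up F_n)$ is witnessed by a different iteration for each $n$, and nothing glues them together. Countability alone cannot bridge this: Johnstone's countable dcpo $\mathbb{N}\times(\mathbb{N}\cup\{\omega\})$ is not quasicontinuous. The cover must be built from the hypothesis $V\ll U$ in a specific way, and that construction is missing.

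The paper avoids Scott closures by negating a set-level statement: that some finite $F\subseteq U$ satisfies $V\subseteq\up F\subseteq U$. This gives $F\ll x$ at once, since a directed $D$ with $\bigvee D\ge x$ has $\bigvee D\in V$ and hence meets $V\subseteq\up F$. If the statement fails, choose $a_n\in V\setminus\up\{u_1,\dots,u_n\}$ and put $C_k=(P\setminus U)\cup\bigcup_{n\ge k}\dn a_n$.

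The enumeration of $U$ is used through one fact: $u_j\le a_n$ forces $n<j$. So if $D\subseteq C_k$ is directed with $\bigvee D\in U$, then some $u_j\in D$, and the cofinal part $D\cap\up u_j$ lies in the finite union $\bigcup_{k\le n<j}\dn a_n$, which is Scott closed. Hence $C_k$ is Scott closed. Also $u_j\notin C_j$, so $\bigcap_k C_k=P\setminus U$. The sets $P\setminus C_k$ therefore form an increasing Scott open cover of $U$. Then $V\ll U$ gives $V\subseteq P\setminus C_k$ for some $k$, contradicting $a_k\in V\cap C_k$.

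The idea your plan lacks is to replace the uncontrollable closures by the principal ideals $\dn a_n$ of points of $V$ that escape $\up F_n$. With it, no interpolant $W$ is needed, and your Step 3 itself follows by applying the same claim to $W\ll U$.
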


\begin{proof} Fix $x\in U\in\sigma(P)$. By core-compactness, there exists
$V\in\sigma(P)$ such that
$$
x\in V\ll_{\sigma(P)}U,
$$
where $\ll_{\sigma(P)}$ denotes the way-below relation in the
open-set lattice $(\sigma(P),\subseteq)$.

$\mathbf{Claim}$: There exists a finite set $F\subseteq U$ such that $V\subseteq\uparrow F\subseteq U$.

If $U$ is finite, take $F=U$. Otherwise, we enumerate $U=\{u_1,u_2,\ldots\}$.
Assume that no such finite set exists.
Since $U$ is an upper set, for each $n\geq1$ we can choose
$$
a_n\in V\setminus\uparrow\{u_1,\ldots,u_n\}.
$$
For each $k\geq1$, define
$$
C_k=(P\setminus U)\cup\bigcup_{n\geq k}\downarrow a_n.
$$

It is clearly a lower set. Let $D\subseteq C_k$ be directed.
If $\sup D\notin U$, then $\sup D\in C_k$.
Suppose that $\sup D\in U$. Since $U$ is Scott open,
there exists
$$
d_0=u_j\in D\cap U.
$$
The set $D'=D\cap\uparrow d_0$ is a cofinal directed subset
of $D$, and hence
$$
\sup D'=\sup D.
$$
Moreover, $D'\subseteq U$ because $U$ is an upper set.
For every $d\in D'$,  $d\in C_k\cap U$ yields
some $n\geq k$ such that $d\leq a_n$.
Thus $u_j=d_0\leq d\leq a_n$.
The choice of $a_n$ forces $n<j$, so
$$
D'\subseteq\bigcup_{k\leq n<j}\downarrow a_n.
$$
Since $D'$ is nonempty and $\bigcup\limits_{k\leq n<j}\downarrow a_n$ is Scott closed,
, we have
$$
\sup D=\sup D'
\in\bigcup_{k\leq n<j}\downarrow a_n
\subseteq C_k.
$$
This proves that $C_k$ is Scott closed.

Next, we have
$$
\bigcap_{k\geq1}C_k=P\setminus U.
$$
Indeed, $P\setminus U\subseteq\bigcap_{k\geq1}C_k$ is immediate.
For the reverse inclusion, take any $u_j\in U$.
For every $n\geq j$, the choice of $a_n$ gives
$u_j\nleq a_n$. Hence $u_j\notin C_j$.

Put $W_k=P\setminus C_k$.
Then $\{W_k\}_{k\geq1}$ is an increasing sequence of Scott-open
sets satisfying
$$
\bigcup_{k\geq1}W_k=U.
$$
Since $V\ll_{\sigma(P)}U$, there exists $k$ such that
$V\subseteq W_k$. However, $a_k\in V\cap C_k$, a contradiction.
This claim is proved.

We have therefore shown that, for every $x\in U\in\sigma(P)$,
there exist a Scott-open set $V$ and a nonempty finite set
$F\subseteq U$ such that
$$
x\in V\subseteq\uparrow F\subseteq U.
$$

By Lemma 3.1, $P$ is quasicontinuous. Hence, using Lemma 3.2, $\Sigma P$ is locally compact and sober.
\end{proof}

There exists a countable core-compact complete lattice $L$ which is continuous.
\begin{example} {\rm Let $L=\{\top,\bot\}\cup\{a_{i}\mid i\in\mathbb{N}\}\cup\{b_{j}\mid j\in\mathbb{N}\}$ ordered by $$\bot\leq a_{0}\leq a_{1}\leq\cdot\cdot\cdot\leq a_{i}\leq\cdot\cdot\cdot\leq\top\ \text{and}\ \bot\leq b_{0}\leq b_{1}\leq\cdot\cdot\cdot\leq b_{j}\leq\cdot\cdot\cdot\leq\top.$$ Then we can conclude that $L$ is a countable complete lattice. Furthermore, $\Sigma L$ is core-compact as all Scott open sets $U$ in $L$ are compact sets. However, $\Sigma L$ is not continuous because $\bigvee\uuar \top=\bigvee\{\bot\}=\bot$. }

\end{example}

\section{The lattice of open sets of the rational number space is not Scott sober.}
In the following, we assert that for rational numbers space $\mathbb{Q}$, $\Sigma\mathcal{O}(\mathbb{Q})$ is not sober.

Let $\widehat{P}$ be the countable dcpo (see \cite{AB75} Example 3.1) constructed by Miao,Xi,Li and Zhao.
\begin{lemma}{\rm (see \cite{U18}) $\widehat{X}=(\widehat{P},\omega(\widehat{P}))$ is a second countable compact $T_{0}$ space but not $SI$-compact.}
\end{lemma}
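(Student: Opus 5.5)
The plan is to check the three positive properties of $\widehat{X}$ directly from the definition of the lower topology, using only that $\widehat{P}$ is countable. The negative property, that $\widehat{X}$ is not $SI$-compact, will instead be reduced to the known non-sobriety of $\Sigma\widehat{P}$ from \cite{AB75}.

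\emph{Second countability and $T_0$.} The subbasis $\{\widehat{P}\setminus\up x\mid x\in\widehat{P}\}$ is countable because $\widehat{P}$ is countable. Hence the family of its finite intersections is a countable base for $\omega(\widehat{P})$.

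For the $T_0$ property, first compute the closure of a point. Each $\up y$ is closed and is the smallest closed set containing $y$, since every subbasic open set avoiding $y$ is disjoint from $\up y$. So $cl(\{y\})=\up y$, and the specialization order of $\widehat{X}$ is the reverse of the order of $\widehat{P}$. Antisymmetry of the order of $\widehat{P}$ then gives $T_0$.

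\emph{Compactness.} I will use Alexander's subbase lemma, so it suffices to treat covers by subbasic sets. A family $\{\widehat{P}\setminus\up x\mid x\in A\}$ covers $\widehat{P}$ exactly when no point lies above every element of $A$, that is, when $A^{u}=\emptyset$. Compactness is therefore equivalent to the following finite-intersection statement:
\begin{center}
every $A\subseteq\widehat{P}$ with $A^{u}=\emptyset$ has a finite subset $F$ with $F^{u}=\emptyset$.
\end{center}
If $\widehat{P}$ has a top element this holds vacuously. Otherwise I will verify it from the explicit construction in \cite{AB75}, Example 3.1. The idea is that pairs of elements lying in different branches of the construction already have no common upper bound, while a set $A$ contained in a single branch does have upper bounds. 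A case split on which branches $A$ meets should therefore finish this step.

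\emph{Failure of $SI$-compactness.} This is the main obstacle, because it requires working with the Scott topology on the open-set lattice $\mathcal{O}(\widehat{X})$. I would connect it to the known non-sober example by embedding $\widehat{P}$, with the order reversed or preserved as appropriate, into $\mathcal{O}(\widehat{X})$ via $x\mapsto \widehat{P}\setminus\up x$. This map sends directed sups to directed unions where the relevant sups exist.

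Using this map, I would transport the non-principal irreducible Scott closed set $\mathcal{A}\subseteq\widehat{P}$ of \cite{AB75} to a Scott closed set $\mathcal{C}$ in $\mathcal{O}(\widehat{X})$, namely the Scott closure of its image. I would then check two things:
\begin{enumerate}
\item $\mathcal{C}$ is irreducible, which should follow because the image of an irreducible set under a continuous map is irreducible;
\item $\mathcal{C}$ has no largest element, which is the delicate part.
\end{enumerate}
For the second point I would argue by contradiction. If $\mathcal{C}$ had a largest element $W$, then $W$ would be the union of the images. Compactness together with the countable base would force $W$ to correspond to a supremum of $\mathcal{A}$ in $\widehat{P}$, and that supremum would make $\mathcal{A}$ principal, which contradicts the choice of $\mathcal{A}$.
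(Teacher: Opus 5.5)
The paper does not prove this lemma; it is quoted from the literature, so your plan has to stand on its own. Second countability and the $T_0$ property are fine. So is the reduction of compactness, via Alexander's lemma, to ``every $A$ with $A^{u}=\emptyset$ has a finite $F\subseteq A$ with $F^{u}=\emptyset$''. The check of that property, however, rests on a guessed ``branch'' structure of $\widehat{P}$ and still has to be done on the actual poset.

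The genuine gap is in the negative claim. You never state or use the definition of $SI$-compactness. If completed, your argument shows only that $\Sigma\mathcal{O}(\widehat{X})$ has an irreducible closed set without a largest element, i.e.\ that $\Sigma\mathcal{O}(\widehat{X})$ is not sober. In the paper, that is the corollary derived \emph{from} this lemma together with the next one (a compact $X$ with $\Sigma\mathcal{O}(X)$ sober is $SI$-compact). Going back from non-sobriety to non-$SI$-compactness requires the converse of that lemma. You do not have it, and it is not to be expected in general. Suppose $SI$-compactness is a condition on open covers that holds trivially when every open cover of $X$ contains $X$. Take a space $Y$ with $\Sigma\mathcal{O}(Y)$ non-sober and adjoin a point whose only neighbourhood is the whole space. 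This gives a compact $X$ with $\mathcal{O}(X)=\mathcal{O}(Y)\cup\{X\}$. Every non-sobriety witness of $\Sigma\mathcal{O}(Y)$ survives in $\Sigma\mathcal{O}(X)$, while every open cover of $X$ contains $X$.

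Independently, your argument that $\mathcal{C}$ has no largest element does not work. Put $\phi(x)=\widehat{P}\setminus\up x$. The only candidate for a largest element is $W=\bigcup\phi(\mathcal{A})=\widehat{P}\setminus\mathcal{A}^{u}$, and this need not be of the form $\phi(s)$. Moreover, a supremum of $\mathcal{A}$ would not make $\mathcal{A}$ principal, since Scott closed sets are closed only under directed joins. For instance, the Dedekind--MacNeille completion of $\widehat{P}$ has all joins and is still not Scott sober.

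What you need is a Scott open neighbourhood of $W$ in $\mathcal{O}(\widehat{X})$ that misses $\phi(\mathcal{A})$, and this is where compactness really enters. Take $\mathcal{A}$ with $\mathcal{A}^{u}=\emptyset$; you must verify from the construction of $\widehat{P}$ that such an $\mathcal{A}$ exists. Then $W=\widehat{X}$. The family $\phi(\mathcal{A})$ is an open cover of $\widehat{X}$ that is irreducible in $\Sigma\mathcal{O}(\widehat{X})$ and does not contain $\widehat{X}$. Compactness makes $\{\widehat{X}\}$ Scott open, so $\widehat{X}\notin\mathcal{C}$. Such an irreducible cover is the natural candidate for a witness against $SI$-compactness. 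You must check it against the actual definition rather than route the argument through the non-sobriety of $\Sigma\mathcal{O}(\widehat{X})$.
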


\begin{lemma}{\rm (see \cite{U19}) Let $X$ be a compact space such that $\Sigma\mathcal{O}(X)$ is sober. Then $X$ is $SI$-compact.}
\end{lemma}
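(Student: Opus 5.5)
The plan is to unwind the definition of $SI$-compactness. I take it in the form used in \cite{U19}: $X$ is $SI$-compact if every open cover $\mathcal{U}\subseteq\mathcal{O}(X)$ of $X$ whose Scott closure $cl_{\sigma}(\mathcal{U})$ in $\Sigma\mathcal{O}(X)$ is irreducible already contains $X$. Equivalently, every Scott-irreducible open cover is trivial. If the cited source phrases it slightly differently, for instance as ``some member of the cover, or of its Scott closure, equals $X$'', the argument below adapts directly. The approach is to represent the irreducible closed set generated by the cover as a principal ideal using sobriety. We then use compactness of $X$ to show that the top element $X$ is isolated in the Scott topology.

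\textbf{Step 1: $X$ is Scott isolated.} Compactness of $X$ means $X\ll X$ in $\mathcal{O}(X)$: whenever $X=\bigcup\mathcal{D}$ for a directed family $\mathcal{D}$, finitely many members cover $X$, and by directedness a single member equals $X$. Hence $\{X\}=\up X$ is Scott open in $\mathcal{O}(X)$.

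\textbf{Step 2: apply sobriety to the closure of the cover.} Let $\mathcal{U}$ be an open cover of $X$ with $\mathcal{A}=cl_{\sigma}(\mathcal{U})$ irreducible. The closure of an irreducible set is irreducible, so $\mathcal{A}$ is an irreducible Scott closed set. By sobriety of $\Sigma\mathcal{O}(X)$, there is $W\in\mathcal{O}(X)$ with $\mathcal{A}=cl_{\sigma}(\{W\})=\dn W$. Every $U\in\mathcal{U}$ lies in $\dn W$, so $U\subseteq W$ for all $U\in\mathcal{U}$. Hence $X=\bigcup\mathcal{U}\subseteq W$, which gives $W=X$.

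\textbf{Step 3: conclude.} Now $X\in\mathcal{A}=cl_{\sigma}(\mathcal{U})$. By Step 1, $\{X\}$ is a Scott open neighbourhood of $X$, so it must meet $\mathcal{U}$. Therefore $X\in\mathcal{U}$, and this is exactly $SI$-compactness.

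The main obstacle is not the computation but matching the precise definition of $SI$-compactness from \cite{U19}. In particular, one must check whether that definition quantifies over covers that are Scott-irreducible subsets, or over irreducible Scott closed subsets of $\mathcal{O}(X)$ whose union is $X$. In the latter formulation, Step 2 applies verbatim with $\mathcal{U}=\mathcal{A}$. The only delicate point I would verify there is that the relevant sup of the irreducible closed set is its generic point $W$, so that it indeed equals $X$. Everything else rests on the two facts that compactness makes $\{X\}$ Scott open and that sobriety produces a generic point.
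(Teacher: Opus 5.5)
Your proof is correct. The paper gives no argument of its own for this lemma; it only quotes it from the cited source, so there is no different route to compare against. Your argument is the natural one. Sobriety identifies $cl_{\sigma}(\mathcal{U})$ with $\dn W$ for some open $W\supseteq\bigcup\mathcal{U}=X$. Compactness of $X$ means $X\ll X$ in $\mathcal{O}(X)$, so $\{X\}$ is Scott open, and this brings $X$ from $cl_{\sigma}(\mathcal{U})$ back into $\mathcal{U}$.

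Your concern about the exact definition is largely settled by the paper itself. The preceding lemma asserts that $\widehat{X}$ is compact but not $SI$-compact, so $SI$-compactness must be strictly stronger than compactness. It therefore cannot be the reading in which Scott-irreducible covers merely admit a finite subcover.

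Your conclusion $X\in\mathcal{U}$ implies any of the usual cover-type conclusions. This holds provided the hypothesis is Scott-irreducibility of the cover, which is the only reading that makes sobriety of $\Sigma\mathcal{O}(X)$ relevant.

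Under the alternative reading, which quantifies over irreducible Scott closed families with union $X$, your Step 2 alone suffices and compactness is not needed. The presence of the compactness hypothesis in the statement is further evidence that the intended definition is the one you proved.
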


By Lemma 5.1 and 5.2, the following corollary 5.3 holds.

\begin{corollary}{\rm $\Sigma\mathcal{O}(\widehat{X})$ is not sober.}

\end{corollary}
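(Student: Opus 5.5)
The corollary follows by contraposition from the two preceding lemmas, which the paper cites as Lemma 5.1 and Lemma 5.2. I would argue as follows.

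Suppose, for a contradiction, that $\Sigma\mathcal{O}(\widehat{X})$ is sober. By Lemma 5.1 the space $\widehat{X}=(\widehat{P},\omega(\widehat{P}))$ is compact; it is in fact second countable, compact and $T_{0}$. This is exactly the hypothesis on $X$ required in Lemma 5.2. Applying Lemma 5.2 with $X=\widehat{X}$, we conclude that $\widehat{X}$ is $SI$-compact. This contradicts the second half of Lemma 5.1, which says that $\widehat{X}$ is not $SI$-compact. Hence $\Sigma\mathcal{O}(\widehat{X})$ is not sober.

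The only step that needs care is checking that the notions match between the two cited results. First, "compact" in Lemma 5.2 should mean ordinary compactness of the whole space, which is what Lemma 5.1 provides; no Hausdorff assumption is involved, since $\widehat{X}$ is only $T_{0}$. Second, the notion of $SI$-compactness must be the same in both sources. Third, $\mathcal{O}(\widehat{X})$ is a complete lattice, hence a dcpo, so $\Sigma\mathcal{O}(\widehat{X})$ is meaningful.

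I expect no real obstacle here. The substantive work lies inside the cited lemmas, in particular the construction in \cite{U18} showing that $\widehat{X}$ fails $SI$-compactness.
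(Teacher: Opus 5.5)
Your proposal is correct and matches the paper's proof. The paper also obtains the corollary by contraposition, combining Lemma 5.1 (that $\widehat{X}$ is compact but not $SI$-compact) with Lemma 5.2 (for compact $X$, sobriety of $\Sigma\mathcal{O}(X)$ forces $SI$-compactness).
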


\begin{lemma}{\rm (see \cite{RE77}) Every countable dense in itself metrizable space is homeomorphic to the rational numbers space $\mathbb{Q}$.}
\end{lemma}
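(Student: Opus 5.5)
This is Sierpiński's classical theorem, cited from the literature, so the plan is to reproduce its standard back-and-forth proof.

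Let $X$ be a countable, dense-in-itself metrizable space. The key input is a countable clopen base. Since $X$ is countable and metrizable, for each point $x$ and each radius $r$ the spheres $\{y\mid d(x,y)=s\}$ can be nonempty for only countably many $s$. So we can choose radii $s<r$ avoiding the distance set $\{d(x,y)\mid y\in X\}$. For such $s$ the ball $B(x,s)$ is clopen. Hence $X$ is zero-dimensional and second countable, with a countable clopen base.

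From this base we build, for every $\varepsilon>0$ and every nonempty clopen set $C$, a partition of $C$ into countably many nonempty clopen pieces of diameter $<\varepsilon$. We enumerate $C$ as $c_1,c_2,\ldots$ and take successive differences $C\cap B(c_n,s_n)\setminus\bigcup_{m<n}B(c_m,s_m)$, discarding empty ones. Because $X$ has no isolated points, every nonempty clopen set is infinite. Splitting off pieces if necessary, we may therefore arrange that each partition is infinite, i.e.\ indexed by $\mathbb{N}$.

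The same construction applies to $\mathbb{Q}$, which is also countable, dense in itself and metrizable. We then run a back-and-forth with enumerations $X=\{x_n\}$ and $\mathbb{Q}=\{q_n\}$. At stage $k$ we maintain matched countably infinite clopen partitions $\mathcal{P}_k$ of $X$ and $\mathcal{R}_k$ of $\mathbb{Q}$, with a bijection between their pieces, such that:
\begin{itemize}
\item each partition refines the previous one, and the bijections are compatible with the refinements;
\item all pieces at stage $k$ have diameter $<1/k$;
\item the pieces containing $x_1,\dots,x_k$ are singled out consistently on both sides.
\end{itemize}
The refinement step is simple: within a matched pair of pieces, partition each side into countably infinitely many small clopen pieces, then match them by any bijection of $\mathbb{N}$.

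The limit map is $h(x)=\bigcap_k R_k(x)$, where $R_k(x)$ is the piece matched with the stage-$k$ piece containing $x$. Here lies the main obstacle. The nested pieces $R_k(x)$ have diameters tending to $0$, but $\mathbb{Q}$ is not complete, so their intersection could be empty. The remedy is the back-and-forth bookkeeping of points. At odd stages, take the least-indexed $x_n$ not yet assigned, pick an element of the currently matched piece in $\mathbb{Q}$ (using $q$-enumeration order), and refine so that $\{x_n\}$ and that chosen rational are ``pinned'': they are designated as the points of their pieces at all later stages. At even stages, do the symmetric step for the least unassigned $q_n$.

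One must check that pinning is compatible with the refinements. This works because we only ever split pieces, and each piece contains its designated point. Every point of $X$ and of $\mathbb{Q}$ eventually gets pinned, and pinned points stay in matched pieces whose diameters tend to $0$. The pinning therefore defines a bijection $h:X\to\mathbb{Q}$.

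It remains to show $h$ is a homeomorphism. The images and preimages of basic clopen pieces are exactly the matched pieces, which form bases on both sides because their diameters tend to $0$. Hence $h$ and $h^{-1}$ are continuous, and $X\cong\mathbb{Q}$.
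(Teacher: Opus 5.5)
The paper does not prove this lemma. It is Sierpi\'nski's theorem, quoted from Engelking, so there is no argument in the paper to compare yours with. Your back-and-forth through matched clopen partitions is the standard proof, and its overall structure is sound. The clopen balls come from avoiding the countable distance set. Pinning points handles the incompleteness of $\mathbb{Q}$. Finally, $h(P)=R$ for every matched pair $P\leftrightarrow R$ gives continuity of both $h$ and $h^{-1}$.

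One step does not follow as written. From ``every nonempty clopen set is infinite'' you conclude that each partition can be made infinite by ``splitting off pieces''.

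\begin{itemize}
\item Finitely many splittings of a finite partition still give a finite partition.
\item The inference is false in general. The Cantor set is metrizable, zero-dimensional and has no isolated points, yet by compactness every partition of it into nonempty clopen sets is finite.
\end{itemize}

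What you need is that nonempty clopen subsets of $X$ are non-compact, and this must use countability. You can get it from your own construction. When partitioning $C=\{c_1,c_2,\ldots\}$, keep $D_n=C\setminus\bigcup_{m<n}B(c_m,s_m)$ nonempty at every step:
\begin{itemize}
\item first pick $y_n\in D_n\setminus\{c_n\}$, which exists because $D_n$ is nonempty and open, hence infinite;
\item then choose $s_n<\min\{\varepsilon/2,\,d(c_n,y_n)\}$, outside the distance set of $c_n$.
\end{itemize}
The enumeration makes the pieces cover $C$, while no finitely many of them do, so infinitely many pieces are nonempty. Alternatively, by Baire, a nonempty countable compact metric space has an isolated point.

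This matters because your matching by a bijection of $\mathbb{N}$ breaks down whenever one side's partition is finite and the other's is infinite. For example, $\mathbb{Q}$ has no finite partition into sets of diameter $<1$.

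Two smaller points. First, state the pinning invariant explicitly. Each piece contains at most one pinned point, and $P$ contains a pinned point $x$ iff the matched $R$ contains $h(x)$. When a new point falls in a piece that already holds a pinned point, refine first so the two are separated on both sides. Second, the statement tacitly needs $X\neq\emptyset$, which your enumeration assumes.
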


\begin{theorem} {\rm Let $X$ be a first-countable $T_{0}$ space with $|X|\leq\aleph_{0}$. There exists a surjective open mapping $g$ from the rational number space $\mathbb{Q}$ to $X$.}
\end{theorem}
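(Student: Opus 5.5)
We aim to build an auxiliary countable, dense-in-itself metrizable space $Y$ together with a continuous open surjection $h\colon Y\to X$. Lemma 5.4 then gives a homeomorphism $\varphi\colon\mathbb{Q}\to Y$, and $g=h\circ\varphi$ is the required surjective open map. Write $X=\{x_1,x_2,\ldots\}$, allowing repetitions if $X$ is finite and excluding $X=\emptyset$. For each $x\in X$ fix a decreasing countable neighbourhood base $U_1(x)\supseteq U_2(x)\supseteq\cdots$, which exists by first countability.

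The natural candidate is a space of sequences. Let $Y$ be the set of all sequences $y=(y_n)_{n\ge 1}$ in $X$ that satisfy two conditions:
\begin{enumerate}[label=(\roman*)]
\item $y_{n+1}\in U_n(y_1)$ for every $n$, so that $y_n$ converges to $y_1$;
\item $y$ is \emph{eventually constant} equal to $y_1$.
\end{enumerate}
We topologize $Y$ as a subspace of the Baire-type product $X_{d}^{\mathbb{N}}$, where $X_d$ is $X$ with the discrete topology; thus a basic open set fixes an initial segment $(y_1,\ldots,y_m)$. Condition (ii) makes $Y$ countable. It is metrizable because it is a subspace of a countable product of countable discrete spaces. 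The map is $h(y)=y_1$, and $h$ is surjective because the constant sequences lie in $Y$.

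Dense-in-itself may fail, since the constant sequence at an isolated point of $X$ can be isolated in $Y$. To fix this, we enlarge the coordinate set to $X\times\mathbb{N}$, i.e.\ we add a free extra label coordinate at each step. The sequences then become $((y_n,k_n))_n$ with conditions (i) and (ii) imposed on the $y_n$ and arbitrary $k_n$ that are eventually $0$. Any basic neighbourhood then contains infinitely many points, obtained by changing a late label, so $Y$ has no isolated points.

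Continuity of $h$ needs an inverse image computation. Let $x=y_1\in O$ with $O$ open in $X$, and choose $m$ with $U_m(x)\subseteq O$. A point sharing the first coordinate $y_1$ lies in $h^{-1}(O)$ outright. Thus $h^{-1}(O)$ is a union of basic cylinders, and $h$ is continuous. Note that this only uses that the first coordinate is fixed in a cylinder; because $Y$ is finer than needed, continuity is essentially free.

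The key step, and the main obstacle, is openness. We must show that for the basic cylinder $[s]$ with $s=(y_1,\ldots,y_m)$, the image $h([s])$ is open in $X$. As it stands, $h([s])=\{y_1\}$, which is generally not open, so the definition of $h$ has to be changed. The repair is to let $h$ read off the \emph{eventual value} of the sequence and to reverse the convergence: require $y_{n+1}\in U_n(y_{n})$-type nesting so that the set of possible eventual values after the prefix $s$ is exactly a basic neighbourhood $U_m(y_m)$. Concretely, we impose that $y_{n+1}\in U_{n}(y_n)$ and that the open sets $W_n=U_n(y_n)$ are decreasing, and we define $h(y)$ to be the eventual constant value. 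Then:
\begin{itemize}
\item \emph{Openness:} $h([s])$ is a union of sets of the form $U_m(y_m)\cap\cdots$, which are open; one has to verify that every point of the relevant open set is reachable as an eventual value.
\item \emph{Continuity:} if $h(y)=z\in O$, then $y$ stabilises at $z$ at some stage $N$. Taking the cylinder of length $N'\ge N$ with $U_{N'}(z)\subseteq O$ and $W_{N'}\subseteq U_{N'}(z)$ forces every extension to stabilise inside $O$.
\end{itemize}
Balancing these two requirements is the delicate point. The indices should be chosen so that $W_n$ is simultaneously a neighbourhood of all admissible later points, and I expect to use the decreasing bases together with a diagonal choice of indices to achieve this. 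Finally, we check that the enlarged $Y$ remains countable, metrizable and dense-in-itself, and then apply Lemma 5.4.
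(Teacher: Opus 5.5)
There is a genuine gap, and it sits exactly at the step you flag as delicate: openness. Your first version fails, as you note. The repaired version also fails: there you impose $y_{n+1}\in U_n(y_n)$ and $U_{n+1}(y_{n+1})\subseteq U_n(y_n)$, and let $h(y)$ be the eventual value. The trouble is that this ties the position $n$ in the sequence to the level $n$ of the fixed base at $y_n$. To append a point $w\in U_m(y_m)$ you need $U_{m+1}(w)\subseteq U_m(y_m)$, and nothing makes the level-$(m+1)$ neighbourhoods uniform across points. Here is a concrete failure. On $X=\mathbb{Q}$, for $a/b$ in lowest terms with $b\ge 1$, let $U_n(a/b)=\{t\in\mathbb{Q}: |t-a/b|<r_n\}$, where $r_n=1$ for $n<b$ and $r_n=1/n$ for $n\ge b$. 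These are decreasing open neighbourhood bases. Suppose $y_n=0$ with $n\ge 2$, and you try to move to $w=c/d\neq 0$. This requires $|w|+r_{n+1}(w)\le 1/n$. If $d>n+1$ it fails, since $r_{n+1}(w)=1$. If $d\le n+1$ it forces $1/(n+1)\le |w|\le 1/(n(n+1))$, which is impossible. Hence $h([(0,0)])=\{0\}$, which is not open. So the claim that the eventual values after a prefix $s$ fill $U_m(y_m)$ is false. The ``diagonal choice of indices'' you hope for is never supplied, and in this rigid form it cannot be.

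The missing idea is to decouple the sequence entries from fixed levels. Let each coordinate be an arbitrary member of the countable family $\mathcal{B}=\{U_k(x)\}$ of open basic sets. Admit a sequence iff its terms form a neighbourhood base at some point, which is unique by $T_0$. This is the paper's $M\subseteq I^{\mathbb{N}}$ with $p(a)=x_a$. The image of the cylinder with prefix $(B_{i_0},\dots,B_{i_{m-1}})$ is then exactly $\bigcap_{j<m}B_{i_j}$, because for $z$ in this intersection you may append the standard base at $z$. That set is open. Continuity holds because a long enough prefix contains a member lying inside a given open $O$. The paper then uses a countable base of $M$ to select a countable $E\subseteq M$ on which $p$ is still open and surjective, and applies Lemma 5.4 to $E\times\mathbb{Q}$. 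Your eventual-constancy idea would work here, and more directly. Take the sequences consisting of a finite prefix followed by the standard base $B(z,0),B(z,1),\dots$ at a point $z$ lying in every prefix set. They form a countable subset of $M$ containing all the witnesses used for openness, so the selection of $E$ could be skipped. Your label trick, or the product with $\mathbb{Q}$, then gives density in itself.
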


\begin{proof} Since $X$ is first-countable, for every $x\in X$, we can fix a neighbourhood base $\mathcal{B}_{x}=\{B(x,n)\mid n\in\mathbb{N}\}$ at $x$. Set $\mathcal{B}=\bigcup\limits_{x\in \widehat{X}}\mathcal{B}_{x}$. Since $|X|\leq\aleph_{0}$, $\mathcal{B}$ is a countable topology base of $X$. Then we can enumerate $\mathcal{B}$ as $\{B_{i}\mid i\in {\rm I}\}$ for some ${\rm I}\subseteq\mathbb{N}$. Equip the index set ${\rm I}$ with the discrete topology. Then ${\rm I}^{\mathds{N}}$ is a second-countable metric space. In deed, a metric $d$ on ${\rm I}^{\mathds{N}}$ is given by
$$\forall \ a,b\in {\rm I}^{\mathds{N}}, \ d(a,b)=\left\{
             \begin{array}{ll}
              0, &\ \ a=b, \\
              2^{-k}, &\ \ k={\rm min}\{n\mid a(n)\neq b(n)\}.
             \end{array}
           \right.$$

Let $$M=\{a\in{\rm I}^{\mathds{N}}\mid \{B_{a(n)}\mid n\in\mathbb{N}\} \mbox{ is a neighbourhood base at a ponit}\ x\in X\}.$$

As $X$ is $T_{0}$, for every $a\in M$ there is a unique $x_{a}\in X$ such that $\{B_{a(n)}\mid n\in\mathbb{N}\}$ is a neighbourhood base at $x_{a}$. The mapping $p:M\longrightarrow X$ is defined by $p(a)=x_{a}$.

$\mathbf{Claim}$ 1: $p$ is surjective.

Note that for every $x\in X$, $\{B(x,n)\mid n\in\mathbb{N}\}$ is a neighborhood base at $x$. We take $b\in I^{\mathds{N}}$ such that $B_{b(n)}=B(x,n)$ for every $n\in\mathbb{N}$. Whence, $p(b)=x$. This shows that $p$ is a surjection.

$\mathbf{Claim}$ 2: $p$ is an open mapping.

Let $s=(i_{0},i_{1},i_{2},\cdot\cdot\cdot,i_{m-1})\in{\rm I}^{m}$. Set $$[s]_{M}=\{a\in M\mid a(j)=i_{j}, \mbox{for all}\ 0\leq j\leq m-1\}.$$ Then we can check that $$p([s]_{M})=\bigcap\limits_{0\leq j\leq m-1}B_{i_{j}}.$$  Indeed, $p([s]_{M})\subseteq\bigcap\limits_{0\leq j\leq m-1}B_{i_{j}}$ is clear. In addition, choose a $z\in\bigcap\limits_{0\leq j\leq m-1}B_{i_{j}}$. We fix a $k\in{\rm I}^{\mathbb{N}}$ such that $$j\in\mathds{N}, \ k(j)=\left\{
             \begin{array}{ll}
              i_{j}, &\ \ 0\leq j\leq m-1, \\
              \widehat{j}, &\ B_{\widehat{j}}=B(z,j-m),m\leq j.
             \end{array}
           \right.$$ Then $k\in[s]_{M}$ and $p(k)=z$. Note that the collection of all sets with the forms $[s]_{M}$ is a topology base of $M$. So $p$ is an open mapping.

$\mathbf{Claim}$ 3: $p$ is continuous.

Let $U$ be an open set in $X$ and $a\in p^{-1}(U)$. Suppose $p(a)=x\in U$. Since $\{B_{a(n)}\mid n\in\mathbb{N}\}$ is a neighbourhood base at $x$, there is a $q$ such that $B_{a_{q}}\subseteq U$. Consider $C=[(a(0),a(1),\cdot\cdot\cdot, a_{p})]_{M}$. Then we have $P(C)=\bigcap\limits_{0\leq j\leq q}B_{a_{j}}\subseteq B_{a_{q}}\subseteq U$. Consequently, $a\in C\subseteq p^{-1}(U)$. This means that $p^{-1}(U)$ is open in $M$. Hence, $p$ is continuous.

Note that $M$ is a second-countable metric space. We take $\mathcal{C}_{0}=\{B_{n}\mid n\in\mathbb{N}\}$ as a topology base for $M$. Set $$J=\{(B,x)\in\mathcal{C}\times X\mid x\in p(B)\},$$ where $\mathcal{C}=\mathcal{C}_{0}\cup\{M\}$.
Then $\forall\ (B,x)\in J$, $B\cap p^{-1}(x)\neq\emptyset$. Take a $e_{B,x}\in B\cap p^{-1}(x)$. Set $$E=\{e_{B,x}\mid(B,x)\in J\}.$$

$\mathbf{Claim}$ 4: $\forall B\in\mathcal{C}$, $p(E\cap B)=p(B)$.

Suppose $x\in p(B)$. Then we have $e_{B,x}\in E\cap B\cap p^{-1}(x)$. Consequently, $x=p(e_{B,x})\in p(E\cap B)$. Claim 4 is proved.

Consider the mapping $p_{E}:E\longrightarrow X$ defined by $p_{E}(e)=p(e)$.

$\mathbf{Claim}$ 5: $p_{E}$ is a surjection.

Take a $x\in X$, $x\in p(M)$ because $p$ is a surjection. As $M\in\mathcal{C}$, we have $e_{M,x}\in E\cap M\cap p^{-1}(x)$ and hence $p(e_{M,x})=p_{E}(e_{M,x})=x$.

$\mathbf{Claim}$ 6: $p_{E}$ is continuous.

Since $p:M\longrightarrow X$ is continuous, $p_{E}$ is also continuous with respect to the subspace topology $E$.

$\mathbf{Claim}$ 7: $p_{E}$ is an open mapping.

Let $V$ be an open set in $M$ and $W=E\cap V$. Then  $V=\bigcup\{B\in\mathcal{C}\mid B\subseteq V\}$. By claim 4, we have
 $$P_{E}(W)=p(E\cap V)=\bigcup\limits_{B\in\mathcal{C},B\subseteq V}p(E\cap B)=\bigcup\limits_{B\in\mathcal{C}, B\subseteq V}p(B).$$ Thus, $P_{E}(W)$ is open in $X$.

Now, we obtain a countable nonempty second-countable metric space $E$. Let $$Z=E\times \mathbb{Q}.$$ Then $Z$ is a countable nonempty second-countable metric space. Furthermore, $Z$ is dense in itself because $\mathbb{Q}$ is that space. By Lemma 5.4, $Z$ is homeomorphic to $\mathbb{Q}$ and the homeomorphism from $\mathbb{Q}$ to $Z$ is denoted by $h$. Now, we can conclude that $g:\mathbb{Q}\longrightarrow X$ is the mapping we desired, where $$g=p_{E}\circ \pi_{E}\circ h$$ and $\pi_{E}: E\times \mathbb{Q}\longrightarrow E$ is the projection.
\end{proof}

\begin{lemma}{\rm (see \cite{GG03}) Every retract of a sober space is sober.}
\end{lemma}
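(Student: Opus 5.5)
Since the statement is a standard fact, the plan is a direct point-set argument using only the definition of sobriety. Let $Y$ be sober and $X$ a retract of $Y$, with continuous maps $f:X\longrightarrow Y$ and $g:Y\longrightarrow X$ such that $g\circ f=id_{X}$. We need two things: that $X$ is $T_{0}$, and that every irreducible closed subset of $X$ is the closure of a unique point.

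\emph{$T_{0}$.} Suppose $x,y\in X$ have the same closure. Continuity of $f$ gives $f(cl\{x\})\subseteq cl\{f(x)\}$. Hence $f(x)\in cl\{f(y)\}$ and $f(y)\in cl\{f(x)\}$. Since $Y$ is sober, and sober spaces are $T_{0}$, this forces $f(x)=f(y)$. Applying $g$ gives $x=y$.

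\emph{Generic points.} Let $A\subseteq X$ be irreducible and closed. The image of an irreducible set under a continuous map is irreducible, and the closure of an irreducible set is irreducible. So $B=cl_{Y}(f(A))$ is an irreducible closed subset of $Y$. By sobriety of $Y$ there is $y\in Y$ with $B=cl_{Y}\{y\}$. Put $x=g(y)$; we show $A=cl_{X}\{x\}$.
\begin{itemize}
\item For $\supseteq$: $y\in B$, so $x=g(y)\in g(cl_{Y}(f(A)))\subseteq cl_{X}(g(f(A)))=cl_{X}(A)=A$. Since $A$ is closed, $cl_{X}\{x\}\subseteq A$.
\item For $\subseteq$: take $a\in A$. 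Then $f(a)\in B=cl_{Y}\{y\}$. Continuity of $g$ gives $a=g(f(a))\in cl_{X}\{g(y)\}=cl_{X}\{x\}$.
\end{itemize}

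Uniqueness of the generic point follows from $T_{0}$.

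No step is a serious obstacle. The only points needing care are the two facts used in the generic-point step, namely that continuous images and closures preserve irreducibility, and the inclusion $g(cl\,S)\subseteq cl\,g(S)$. Both are immediate from the definitions via preimages of closed sets.
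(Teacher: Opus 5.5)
Your proof is correct and complete. The $T_0$ step, the facts that continuous images and closures preserve irreducibility, and both inclusions $cl_X\{g(y)\}\subseteq A$ and $A\subseteq cl_X\{g(y)\}$ are justified exactly as you state.

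The paper gives no proof of this lemma; it only quotes it from \cite{GG03}. So there is no argument of the paper's to match step by step, and your direct point-set proof supplies what the paper leaves to the reference.

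For comparison, the usual textbook route is categorical. The map $f$ is a homeomorphism of $X$ onto the subspace $f(X)=\{y\in Y\mid f(g(y))=y\}$, which is the equalizer of $id_Y$ and $f\circ g$. Sober spaces are closed under such equalizers, and indeed under all limits in the category of topological spaces, since they form a full reflective subcategory.

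The two routes produce the generic point differently. In the equalizer proof the generic point is $y$ itself, and $T_0$-ness of $Y$ shows that $y$ lies in $f(X)$. In your proof you transport $y$ back to $g(y)$ and never need to locate $y$ inside $f(X)$.

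The categorical route gives more, namely closure of sobriety under products and equalizers, but it needs those closure facts as a separate lemma. Yours is self-contained, uses only the definition of sobriety, and is all that the paper's application requires.
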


\begin{theorem} {\rm $\Sigma\mathcal{O}(\mathbb{Q})$ is not sober.}
\end{theorem}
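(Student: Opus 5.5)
We will show that $\Sigma\mathcal{O}(\widehat{X})$ is a retract of $\Sigma\mathcal{O}(\mathbb{Q})$, so that the non-sobriety of $\Sigma\mathcal{O}(\widehat{X})$ (Corollary 5.3) transfers to $\Sigma\mathcal{O}(\mathbb{Q})$ by Lemma 5.5.

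First, $\widehat{X}=(\widehat{P},\omega(\widehat{P}))$ is countable and, by Lemma 5.1, second countable and $T_{0}$, hence first-countable. Theorem 5.5 therefore gives a surjective continuous open map $g:\mathbb{Q}\longrightarrow\widehat{X}$. (Continuity of $g$ comes from the construction $g=p_{E}\circ\pi_{E}\circ h$, since each factor is continuous; openness likewise.)

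Next, define
$$f:\mathcal{O}(\widehat{X})\longrightarrow\mathcal{O}(\mathbb{Q}),\quad f(U)=g^{-1}(U),$$
$$r:\mathcal{O}(\mathbb{Q})\longrightarrow\mathcal{O}(\widehat{X}),\quad r(W)=g(W).$$
Both maps are well defined because $g$ is continuous and open, and both are monotone. Each preserves arbitrary unions: $g^{-1}$ preserves unions of sets, and so does the direct image $g$. In particular both preserve directed suprema in the open-set lattices, where suprema are unions. So $f$ and $r$ are Scott continuous, i.e. continuous as maps $\Sigma\mathcal{O}(\widehat{X})\to\Sigma\mathcal{O}(\mathbb{Q})$ and back. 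Finally, surjectivity of $g$ gives $g(g^{-1}(U))=U$, hence $r\circ f=id_{\mathcal{O}(\widehat{X})}$.

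Thus $\Sigma\mathcal{O}(\widehat{X})$ is a retract of $\Sigma\mathcal{O}(\mathbb{Q})$. If $\Sigma\mathcal{O}(\mathbb{Q})$ were sober, Lemma 5.5 would make $\Sigma\mathcal{O}(\widehat{X})$ sober, contradicting Corollary 5.3.

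The only substantive step is securing a map $g$ that is simultaneously continuous, open and surjective, and that is exactly Theorem 5.5. The rest reduces to the routine facts that direct and inverse images under such a $g$ are sup-preserving and compose to the identity. I would double-check only the hypotheses needed to apply Theorem 5.5 to $\widehat{X}$: that $\widehat{X}$ is countable, which holds since $\widehat{P}$ is countable, and that it is first-countable, which follows from second countability.
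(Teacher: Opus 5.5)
Your proof is correct and follows the same route as the paper: get a continuous open surjection $g:\mathbb{Q}\to\widehat{X}$ from the open-mapping theorem, use $U\mapsto g^{-1}(U)$ and $W\mapsto g(W)$ as union-preserving (hence Scott-continuous) maps between the open-set lattices, and carry non-sobriety of $\Sigma\mathcal{O}(\widehat{X})$ across via the retract lemma. You also have the direction right where the paper's write-up slips: it asserts $s\circ t=id$ and that $\Sigma\mathcal{O}(\mathbb{Q})$ is a retract of $\Sigma\mathcal{O}(\widehat{X})$, which is false because $g^{-1}(g(V))$ is generally strictly larger than $V$, and would not give the conclusion anyway; your identity $r\circ f=id_{\mathcal{O}(\widehat{X})}$, which follows from surjectivity of $g$, makes $\Sigma\mathcal{O}(\widehat{X})$ the retract, and that is exactly what the contrapositive needs.
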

\begin{proof} By Lemma 5.1 and Theorem 5.5, there exists a surjective open continuous mapping $g$ from $\mathbb{Q}$ to $\widehat{X}$. Define two mappings $$s:\mathcal{O}(\widehat{X})\longrightarrow\mathcal{O}(\mathbb{Q})$$ and $$t:\mathcal{O}(\mathbb{Q})\longrightarrow\mathcal{O}(\widehat{X})$$ by
$$s(U)=g^{-1}(U)\ \mbox{and}\ t(V)=g(V).$$

Then $s$ and $t$ are well-defined and further $s\circ t=id$. In addition, we can check that $s$ and $t$ are continuous with respect to the Scott topology. It follows that $\Sigma\mathcal{O}(\mathbb{Q})$ is a retract of $\Sigma\mathcal{O}(\widehat{X})$. By Corollary 5.3 and Lemma 5.6, $\Sigma\mathcal{O}(\mathbb{Q})$ is not sober.
\end{proof}

\end{document}